\newtheorem*{rep@theorem}{\rep@title}
\newcommand{\newreptheorem}[2]{%
\newenvironment{rep#1}[1]{%
 \def\rep@title{#2 \ref{##1}}%
 \begin{rep@theorem}}%
 {\end{rep@theorem}}}
\newtheorem{theorem}{Theorem}[section]
\newtheorem{proposition}{Proposition}[section]
\newtheorem{corollary}{Corollary}[section]
\theoremstyle{definition}
\newtheorem{definition}{Definition}[section]
\DeclareMathOperator{\des}{des}
\begin{document}
\title{Descents in $t$-Sorted Permutations}
\author{Colin Defant}
\address{Princeton University \\ Fine Hall, 304 Washington Rd. \\ Princeton, NJ 08544}
\email{cdefant@princeton.edu}

\begin{abstract}
Let $s$ denote West's stack-sorting map. A permutation is called \emph{$t$-sorted} if it is of the form $s^t(\mu)$ for some permutation $\mu$. We prove that the maximum number of descents that a $t$-sorted permutation of length $n$ can have is $\left\lfloor\frac{n-t}{2}\right\rfloor$. When $n$ and $t$ have the same parity and $t\geq 2$, we give a simple characterization of those $t$-sorted permutations in $S_n$ that attain this maximum. In particular, the number of such permutations is $(n-t-1)!!$. 
\end{abstract}

\keywords{permutation; descent; stack-sorting; valid hook configuration}

\maketitle

\section{Introduction}

In this paper, a ``permutation" is a permutation of a finite set of positive integers, written in one-line notation. Let $S_n$ denote the set of all permutations of the set $[n]$. In his Ph.D. dissertation, West \cite{West} introduced a function $s$, called the \emph{stack-sorting map}, that sends permutations through a vertical ``stack" as follows. Suppose we are given an input permutation $\pi=\pi_1\cdots\pi_n$. At any point in time during the procedure, if the next entry in the input permutation is smaller than the entry at the top of the stack or if the stack is empty, the next entry in the input permutation is placed at the top of the stack. Otherwise, the entry at the top of the stack is appended to the end of the growing output permutation. This process terminates when the output permutation has length $n$, and $s(\pi)$ is defined to be this output permutation. The following illustration shows that $s(4162)=1426$. 

\begin{center}
\includegraphics[width=1\linewidth]{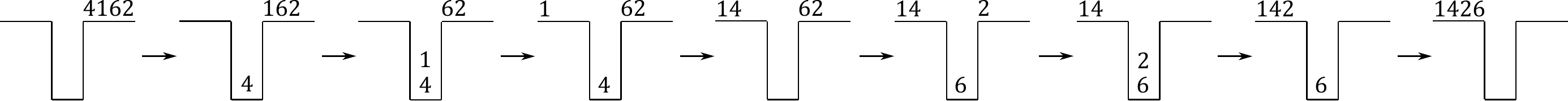}
\end{center}

West's stack-sorting map was actually defined as a deterministic variant of a ``stack-sorting algorithm" that Knuth introduced in \cite{Knuth}. In fact, Knuth's analysis of his stack-sorting algorithm initiated the investigation of permutation patterns, which is now a major area of research \cite{Bona, Kitaev}. It was also the first appearance of the so-called ``kernel method," which is now an indispensable tool in enumerative and analytic combinatorics \cite{Banderier, Bousquet3}.  The stack-sorting map has received a huge amount of attention since its introduction in West's dissertation \cite{Bona, BonaSurvey, BonaSimplicial, BonaSymmetry, BonaWords, Bousquet98, Bousquet, Bouvel, BrandenActions, Branden3, Claesson, Cori, DefantCatalan, DefantCounting, DefantEnumeration, DefantFertility, DefantFertilityWilf, DefantPolyurethane, DefantPostorder, DefantPreimages, DefantClass, DefantEngenMiller, DefantKravitz, Dulucq, Dulucq2, Fang, Goulden, Ulfarsson, West, Zeilberger}. We will mention just a few results in this line of work, referring the reader to \cite{Bona, BonaSurvey, DefantCatalan, DefantCounting, DefantEnumeration, DefantFertility, DefantFertilityWilf, DefantPolyurethane, DefantPostorder, DefantPreimages, DefantClass, DefantEngenMiller, DefantKravitz} for more details. 

Bousquet-M\'elou defined a permutation to be \emph{sorted} if it is in the image of $s$, and she described a method that allows one to determine whether or not a given permutation is sorted. She also found a bivariate generating function equation that implicitly enumerates sorted permutations, but she was unable to remove the additional ``catalytic variable." In short, this means that counting sorted permutations explicitly (or even obtaining asymptotic information) is hard. 

In recent years, the current author \cite{DefantCatalan, DefantCounting, DefantEnumeration, DefantFertility, DefantFertilityWilf, DefantPostorder, DefantPreimages, DefantClass, DefantEngenMiller, DefantKravitz} has introduced objects called ``valid hook configurations" in order to reprove and generalize old results and to prove new results concerning the map $s$. These objects allow one to compute the \emph{fertility} of a permutation, which is the number of preimages of the permutation under $s$. In particular, they give a method, which we describe in Section \ref{Sec:VHC}, for determining if a permutation is sorted. This method and Bousquet-M\'elou's have some similarities, but we believe the former is better suited for our purposes. 

A \emph{descent} of a permutation $\pi=\pi_1\cdots\pi_n$ is an index $i\in[n-1]$ such that $\pi_i>\pi_{i+1}$. Let $\des(\pi)$ denote the number of descents of $\pi$. It is known (see either \cite{DefantEngenMiller} or Exercise 18 in Chapter 8 of \cite{Bona}) that every sorted permutation of length $n$ has at most $\frac{n-1}{2}$ descents. The authors of \cite{DefantEngenMiller} studied the permutations that attain this maximum, which turn out to have several interesting properties. The exploration of these permutations began with the following characterization. Let us say a permutation is \emph{uniquely sorted} if it has exactly one preimage under $s$. 

\begin{theorem}[\!\!\cite{DefantEngenMiller}]\label{Thm1}
A permutation of length $n$ is uniquely sorted if and only if it is sorted and has exactly $\frac{n-1}{2}$ descents. 
\end{theorem}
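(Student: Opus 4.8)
The plan is to run the argument through the classical bijection between permutations and decreasing binary trees. Recall that the decreasing binary tree $T_\sigma$ of a permutation $\sigma$ is built by putting the largest entry at the root, the tree of the entries left of it as the left subtree, and the tree of the entries right of it as the right subtree; then $\sigma$ is the in-order word of $T_\sigma$, and it is a classical fact that $s(\sigma)$ is the post-order word of $T_\sigma$. Hence $\pi$ is sorted if and only if it is the post-order word of some decreasing binary tree, and the fertility of $\pi$ equals the number of decreasing binary trees with post-order word $\pi$. The first step is a counting observation about such a tree $T$ with post-order word $v_1\cdots v_n$: every internal (non-leaf) node of $T$ is the top of an ascent of $v_1\cdots v_n$, since the node visited immediately before it in post-order is one of its children and hence smaller. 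Distinct internal nodes give distinct ascents, so $\#\{\text{ascents}\}\ge\#\{\text{internal nodes}\}=n-\#\{\text{leaves}\}\ge n-\lceil n/2\rceil=\lfloor n/2\rfloor$, whence $\des(\pi)=(n-1)-\#\{\text{ascents}\}\le\lceil n/2\rceil-1\le\frac{n-1}{2}$. Moreover equality forces $n$ odd, forces $T$ to be full with exactly $\frac{n+1}{2}$ leaves, and forces every ascent-top to be internal; since $v_1$ is never an ascent-top, the last condition says precisely that every leaf other than $v_1$ is the bottom of a descent.

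For the direction ``uniquely sorted $\Rightarrow\des(\pi)=\frac{n-1}{2}$'', let $T$ be the unique decreasing binary tree with post-order word $\pi$. If $T$ had a node with exactly one child, reversing whether that child is a left or right child would not change the post-order word and would produce a second tree; so $T$ is full. If some leaf $\ell=v_{i+1}$ with $i\ge1$ had $v_i<v_{i+1}$, then (using fullness) a short analysis of the post-order traversal shows $v_i$ is the root of the left subtree of some node $m$ whose right subtree has $\ell$ as its leftmost leaf; detaching the subtree rooted at $v_i$ and reattaching it as the left subtree of $\ell$ is legitimate because $\ell>v_i$, so the decreasing property survives, and one checks that the post-order word is unchanged — a second tree, contradiction. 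Hence $T$ is full with no leaf other than $v_1$ an ascent-top, so by the first paragraph $\des(\pi)=\frac{n-1}{2}$ (in particular $n$ is odd).

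For the converse, suppose $\pi$ is sorted with $\des(\pi)=\frac{n-1}{2}$; I would show its fertility is $1$ by strong induction on $n$. By the first paragraph, every decreasing binary tree with post-order word $\pi$ is full with no leaf other than $v_1$ an ascent-top. Its root is $v_n=\max$, so everything reduces to the uniqueness of the split index $k$, where $v_1\cdots v_k$ is the post-order word of the left subtree and $v_{k+1}\cdots v_{n-1}$ (non-empty, since the tree is full) that of the right subtree. From $\des(\pi)=\des(v_1\cdots v_k)+\des(v_{k+1}\cdots v_{n-1})+[v_k>v_{k+1}]+[v_{n-1}>v_n]$, together with $v_{n-1}<v_n$ and the fact that $v_{k+1}$ is a leaf other than $v_1$ and hence a descent bottom (so $[v_k>v_{k+1}]=1$), one gets $\des(v_1\cdots v_k)=\frac{k-1}{2}$ and $\des(v_{k+1}\cdots v_{n-1})=\frac{n-k-2}{2}$; thus both the prefix and the suffix are sorted words of maximal descent number, so by the inductive hypothesis each has a unique decreasing binary tree, and the whole tree is determined once $k$ is. It remains to show that only one index $k$ yields a valid pair of post-order words, and this is the step I expect to be the main obstacle: one must exploit the rigidity forced by $\des(\pi)=\frac{n-1}{2}$ — that the descent bottoms are exactly the leaves, so that the position of the second-largest entry $n-1$ (the root of whichever subtree contains it) and, recursively, the descent pattern pin down the boundary between the two subtrees — to exclude a second admissible split.

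Finally, a remark on alternatives. The whole argument can instead be phrased through the valid hook configuration description of fertility from Section~\ref{Sec:VHC}: one would show that $\des(\pi)=\frac{n-1}{2}$ forces $\pi$ to have a unique valid hook configuration in which every region carries at most one point, so that fertility, being a sum of products of Catalan numbers over valid hook configurations, collapses to $1$; conversely $\des(\pi)<\frac{n-1}{2}$ should force either a second valid hook configuration or a region with at least two points, hence fertility greater than $1$. This relocates the difficulty to proving that maximal descent number is equivalent to this extreme rigidity of the hook configuration, which is again the crux.
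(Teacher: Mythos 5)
The paper does not actually prove Theorem \ref{Thm1}; it is imported from \cite{DefantEngenMiller}, so there is no internal argument to compare against. Judged on its own, your proposal is roughly two thirds of a proof. The translation to decreasing binary trees is sound: $s(\sigma)$ is the postorder word of the decreasing binary tree of $\sigma$, fertility is the number of decreasing binary trees with a given postorder word, the comparison of internal nodes with ascent tops correctly gives $\des(\pi)\leq\frac{n-1}{2}$ for sorted $\pi$, and your two postorder-preserving surgeries (flipping the unique child of a non-full node, and re-hanging the left subtree of $m$ below the leaf $\ell$ when $\ell$ is an ascent top) are both legitimate and produce genuinely new trees. So the direction ``uniquely sorted $\Rightarrow$ exactly $\frac{n-1}{2}$ descents'' is complete.

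The converse has a genuine gap exactly where you flag it, and it is not a technicality. Your induction shows that each admissible split index $k$ contributes exactly one tree, i.e.\ that the fertility of $\pi$ equals the number of admissible splits; but bounding that number by $1$ \emph{is} the statement that the fertility is $1$ at the top level of the recursion, so the argument has relocated the theorem rather than proved it. For a general sorted permutation, multiple admissible splits are precisely what make the fertility large, and the constraints you extract (each $v_k=\max\{v_1,\dots,v_k\}$, each $v_{k+1}$ a descent bottom, the recursive descent counts) hold simultaneously for every admissible $k$ and do not visibly conflict for two splits $k<k'$. Some additional structural input is required to exclude a second split, and none is offered. Note that this unproved direction is the one the paper actually leans on (e.g.\ in Claims~2 and~4 of the proof of Proposition \ref{Prop1}). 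Your closing remark about valid hook configurations has the same status: asserting that maximal descent number forces a unique hook configuration with singleton regions is again the entire content of the theorem, not a reduction of it.
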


The previous theorem implies that every uniquely sorted permutation has odd length. The authors of \cite{DefantEngenMiller} defined a bijection between uniquely sorted permutations and certain weighted matchings that Josuat-Verg\`es \cite{Josuat} studied in the context of free probability theory. From this, they deduced that the number of uniquely sorted permutations in $S_{2k+1}$ is $A_{k+1}$, where $(A_m)_{m\geq 1}$ is sequence A180874 in the OEIS and is known as \emph{Lassalle's sequence} \cite{OEIS}. This exciting new sequence first appeared in \cite{Lassalle}, where Lassalle proved a conjecture of Zeilberger by showing that it is increasing. In fact, the bijection established in \cite{DefantEngenMiller} produced three new combinatorial interpretations of Lassalle's sequence; the only combinatorial interpretation known beforehand involved the weighted matchings that Josuat-Verg\`es examined. The authors of \cite{DefantEngenMiller} also showed that the sequences $(A_{k+1}(\ell))_{\ell=1}^{2k+1}$ are symmetric, where $A_{k+1}(\ell)$ is the number of uniquely sorted permutations in $S_{2k+1}$ that start with the number $\ell$. One can define the \emph{hotspot} of a uniquely sorted permutation $\pi_1\cdots\pi_n$ to be $\pi_{r+1}$, where $r$ is the largest element of $[n-1]$ such that $\pi$ has $\frac{n-r}{2}$ descents in $\{r,\ldots,n-1\}$. This somewhat strange definition is justified by the surprising fact that $A_{k+1}(\ell)$ is the number of uniquely sorted permutations in $S_{2k+1}$ with hotspot $\ell-1$. More recently, the current author \cite{DefantEnumeration} has found several bijections between sets of uniquely sorted permutations avoiding various patterns and intervals in posets of Dyck paths.  

It is typical to think of the stack-sorting map $s$ as producing a dynamical system on $S_n$. Thus, we let $s^t$ denote the composition of $s$ with itself $t$ times. It is straightforward to check that $s^{n-1}(\pi)=123\cdots n$ for every $\pi\in S_n$. Consequently, we can endow $S_n$ with the structure of a rooted tree (the ``stack-sorting tree on $S_n$") by letting $123\cdots n$ be the root and declaring that a nonidentity permutation $\sigma$ is a child of $\pi$ if $s(\sigma)=\pi$ (see Figure \ref{Fig1}). One of the most well-studied notions concerning the stack-sorting map is that of a \emph{$t$-stack-sortable} permutation \cite{Bona, BonaSimplicial, BonaSurvey, BonaSymmetry, BonaWords, Bousquet98, Bousquet, BrandenActions, Branden3, Cori, DefantCounting, DefantPreimages, Dulucq, Dulucq2, Fang, Goulden, West, Zeilberger}, which is a permutation $\pi$ such that $s^t(\pi)$ is increasing. When we restrict attention to $S_n$, we see that these are the permutations of depth at most $t$ in the stack-sorting tree on $S_n$. The definition of a sorted permutation is in some sense dual to that of a $1$-stack-sortable permutation. Indeed, a permutation in $S_n$ is sorted if and only if it has height at least $1$ in the stack-sorting tree on $S_n$. In this article, we consider permutations of height at least $t$ in this stack-sorting tree. This naturally generalizes the definition of a sorted permutation, providing a dual to the notion of a $t$-stack-sortable permutation.  

\begin{figure}[h]
\begin{center}
\includegraphics[width=.7\linewidth]{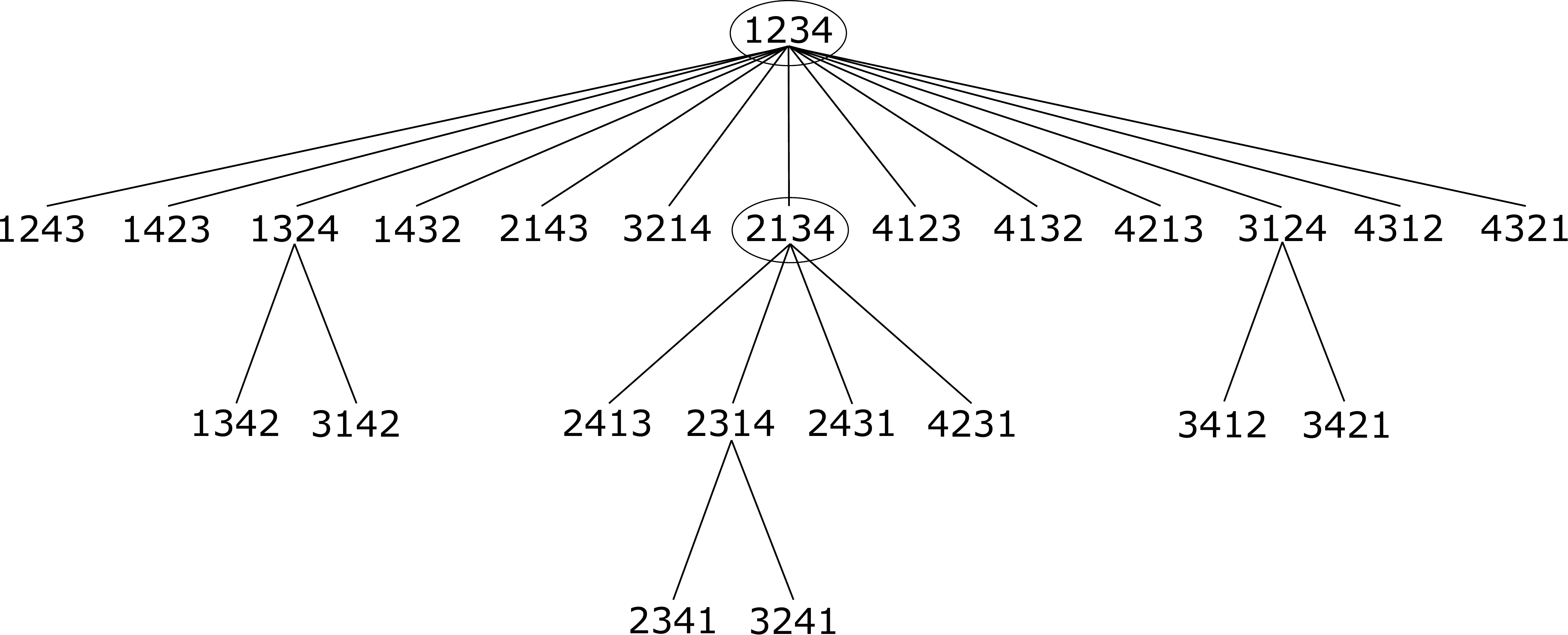}
\caption{The stack-sorting tree on $S_4$. The only $2$-sorted permutations in $S_4$ are $1234$ and $2134$, which are circled.} 
\label{Fig1}
\end{center}  
\end{figure}

\begin{definition}\label{Def1}
A permutation is called \emph{$t$-sorted} if it is of the form $s^t(\mu)$ for some permutation $\mu$. 
\end{definition}

Our main results are as follows. We phrase these results for permutations in $S_n$, but the analogous statements for permutations of arbitrary finite sets of positive integers hold as well. Recall that a \emph{left-to-right maximum} of a permutation is an entry that is larger than everything to its left. 

\begin{theorem}\label{Thm2}
If $n\geq t\geq 1$, then the maximum number of descents that a $t$-sorted permutation in $S_n$ can have is $\left\lfloor\frac{n-t}{2}\right\rfloor$. 
\end{theorem}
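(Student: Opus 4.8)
The plan is to prove the two inequalities separately. For the \textbf{upper bound} --- that every $t$-sorted permutation in $S_n$ has at most $\lfloor(n-t)/2\rfloor$ descents --- I would induct on $t$, taking as the base case $t=1$ the known fact that a sorted permutation of length $n$ has at most $\lfloor(n-1)/2\rfloor$ descents. A preliminary ingredient is the monotonicity estimate $\des(s(\pi))\le\des(\pi)$, valid for every permutation $\pi$: writing $\pi=\pi_L\,m\,\pi_R$ with $m=\max\pi$, one has $s(\pi)=s(\pi_L)\,s(\pi_R)\,m$, so the descent that $\pi$ has at the ``$m\,\pi_R$'' junction disappears in $s(\pi)$ while at most one new descent can appear at the ``$s(\pi_L)\,s(\pi_R)$'' junction; an induction on length then closes the estimate.

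For the inductive step, let $\sigma\in S_n$ be $t$-sorted with $d=\des(\sigma)$, write $\sigma=s^t(\mu)$, and put $\tau=s^{t-1}(\mu)$, so $s(\tau)=\sigma$ and $\tau$ is $(t-1)$-sorted (hence sorted, as $t\ge 2$). Arguing by contradiction, suppose $d>\lfloor(n-t)/2\rfloor$, i.e.\ $n\le 2d+t-1$. Then $\des(\tau)\ge d$ by monotonicity, while the inductive hypothesis gives $\des(\tau)\le\lfloor(n-(t-1))/2\rfloor\le d$; hence $\des(\tau)=d=\des(s(\tau))$. So the whole difficulty collapses to a self-contained statement about one application of $s$: \emph{no sorted permutation $\tau$ with $\des(\tau)\ge 1$, $\des(s(\tau))=\des(\tau)$, and $|\tau|=2\des(\tau)+(t-1)$ can exist.} Iterating back down the preimage chain $\sigma=\mu^{(0)},\ \tau=\mu^{(1)},\ \mu^{(2)},\dots$ (on which the descent counts are forced to stay equal to $d$) then produces the contradiction.

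The heart of the argument is therefore this rigidity statement, and I would attack it with valid hook configurations. A sorted $\tau$ carries a valid hook configuration with exactly $\des(\tau)$ hooks; meanwhile the equality $\des(s(\tau))=\des(\tau)$, fed through the recursion for $s$, pins down a very rigid local condition at every node of the decreasing binary tree of $\tau$ (every node with a right child has a left child, plus a value condition). I expect that combining these two descriptions --- counting the distinct southwest endpoints, northeast endpoints, and the sizes of the regions cut out by the hooks --- forces enough ``extra'' entries into $\tau$ to give $|\tau|\ge 2\des(\tau)+(t-1)+1$, which is the needed contradiction. In the first nontrivial case $t=2$ this rigidity statement specializes, via Theorem \ref{Thm1}, to the clean assertion that no uniquely sorted permutation has a uniquely sorted preimage, which I would verify directly (the unique preimage of a uniquely sorted permutation, coming from a valid hook configuration all of whose regions are empty, is too constrained to admit a valid hook configuration of its own).

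For the \textbf{lower bound} it suffices to exhibit one $t$-sorted permutation in $S_n$ attaining $\lfloor(n-t)/2\rfloor$ descents. I would first reduce to the case $n\equiv t\pmod 2$: if $\pi'\in S_{n-1}$ is $t$-sorted with $\lfloor(n-1-t)/2\rfloor$ descents, say $\pi'=s^t(\mu')$, then $\pi'n=s^t(\mu'n)$ is $t$-sorted (the entry $n$ stays in last position under each application of $s$) and $\des(\pi'n)=\des(\pi')$, and $\lfloor(n-1-t)/2\rfloor=\lfloor(n-t)/2\rfloor$ when $n\not\equiv t\pmod 2$. For $n=t+2d$ I would build an explicit $\mu\in S_n$ recursively --- inserting the two new entries near the old maximum when passing from $S_{n-2}$ to $S_n$, as in the small cases $s^2(2341)=2134$ and $s^2(245163)=214356$ --- and verify via the recursion for $s$ that all of $\mu,s(\mu),\dots,s^t(\mu)$ have exactly $d$ descents; alternatively this follows from the characterization of extremal $t$-sorted permutations established later (its count $(n-t-1)!!$ being positive). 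The main obstacle, as flagged above, is exactly the tie-breaking rigidity step: it is what distinguishes $\lfloor(n-t)/2\rfloor$ from the weaker bound $\lfloor(n-t+1)/2\rfloor$ that the naive induction produces, and I expect it to require the full valid-hook-configuration description of preimages under $s$.
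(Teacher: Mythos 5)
Your lower-bound construction is fine in outline (padding with a fixed point to handle the parity-mismatched case, plus an explicit extremal example when $n\equiv t\pmod 2$, is essentially what the paper does), but your upper bound has a genuine gap. The induction on $t$ via $\des(s(\pi))\le\des(\pi)$ only yields $\lfloor(n-t+1)/2\rfloor$, as you note, and everything then hinges on the ``rigidity'' lemma, which you do not prove --- you only write that you ``expect'' a count of hook endpoints to force extra entries. Worse, the lemma is false as you state it. You formulated it for $\tau$ merely \emph{sorted}: no sorted $\tau$ with $\des(\tau)=d\ge 1$, $\des(s(\tau))=d$, and $|\tau|=2d+(t-1)$. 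Take $t=3$, $d=1$, $\tau=2314$: it is sorted (since $s(2341)=2314$), has one descent, has length $4=2d+2$, and $s(2314)=2134$ still has one descent. So the reduction to ``a self-contained statement about one application of $s$'' does not close; you would have to retain the full hypothesis that $\tau$ is $(t-1)$-sorted (note $2314$ is not $2$-sorted), and proving the resulting statement essentially requires understanding the extremal $(t-1)$-sorted permutations themselves, which is the content of the much harder Theorem \ref{Thm3}. Your $t=2$ specialization (``no uniquely sorted permutation has a uniquely sorted image'') is a true and relevant fact, but it is exactly the kind of statement whose proof occupies all of Proposition \ref{Prop1}, not a quick verification.

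The paper's upper bound avoids all of this with one structural observation you are missing: a $t$-sorted permutation of $[n]$ must end in the entries $n-t+1,n-t+2,\ldots,n$ in that order (iterate the identity $s(\mu_L\, n\, \mu_R)=s(\mu_L)s(\mu_R)n$). Writing $\pi=\pi'(n-t+2)\cdots n$ with $\pi'\in S_{n-t+1}$, the same identity shows $\pi'=s(\sigma')$ for a suitable $\sigma'$, i.e.\ $\pi'$ is a \emph{sorted} permutation of length $n-t+1$ carrying all the descents of $\pi$. The known $t=1$ bound applied to $\pi'$ immediately gives $\des(\pi)=\des(\pi')\le\lfloor(n-t)/2\rfloor$, with no induction on $t$, no monotonicity lemma, and no rigidity argument. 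If you want to repair your write-up, replace the inductive step by this one-line reduction of the $t$-sorted problem in $S_n$ to the sorted problem in $S_{n-t+1}$.
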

                 
\begin{theorem}\label{Thm3}
Suppose that $n\geq t\geq 2$ and that $n\equiv t\pmod 2$. A permutation $\pi=\pi_1\cdots\pi_n\in S_n$ is $t$-sorted and has $\frac{n-t}{2}$ descents if and only if its left-to-right maxima are \[\pi_1,\pi_3,\pi_5,\ldots,\pi_{n-t+1},\pi_{n-t+2},\pi_{n-t+3},\ldots,\pi_n.\] In particular, the number of such permutations is $(n-t-1)!!$.  
\end{theorem}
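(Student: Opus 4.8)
The plan is to prove the two implications and the enumeration separately; throughout write $m=n-t+1$ (odd, since $n\equiv t\pmod 2$) and call $\pi\in S_n$ \emph{good} if its left-to-right maxima are exactly $\pi_1,\pi_3,\ldots,\pi_m,\pi_{m+1},\ldots,\pi_n$.

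Several things are immediate. If $\pi$ is good then positions $m+1,\ldots,n$ are all left-to-right maxima with $n$ the last of them, which forces $\pi_{m+1}=m+1,\ldots,\pi_n=n$, while $\pi_1\cdots\pi_m$ is a permutation of $[m]$ with left-to-right maxima exactly in the odd positions; moreover such a $\pi$ has descents exactly at $1,3,\ldots,m-2$, so automatically $\des(\pi)=\frac{m-1}{2}=\frac{n-t}{2}$. Hence the enumerative claim will follow from the characterization together with the fact that a permutation of $[m]$ ($m$ odd) has its left-to-right maxima exactly in the odd positions in $(m-2)!!=(n-t-1)!!$ ways, which follows from an easy recursion on $m$. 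I would also record the elementary bound $\des(s(\sigma))\le\des(\sigma)$ for every permutation $\sigma$: in West's recursion $s(L\,n\,R)=s(L)\,s(R)\,n$ (with $n=\max$) the only potential new descent of $s(\sigma)$ is at the $s(L)$--$s(R)$ junction, while in $\sigma$ the entry $n$ directly precedes $R$ and already supplies a descent there, so induction on length works. Combined with Theorem \ref{Thm2} this yields the ``no slack'' principle: if $\pi$ is $t$-sorted with $\frac{n-t}{2}$ descents and $\pi=s(\rho)$ with $\rho$ being $(t-1)$-sorted, then $\frac{n-t}{2}\le\des(\rho)\le\lfloor\frac{n-t+1}{2}\rfloor=\frac{n-t}{2}$, so $\rho$ is a $(t-1)$-sorted permutation of $S_n$ with the maximum number of descents.

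For the reverse implication I would induct on $t\ge 2$. If $\pi$ is good then $\pi=\beta\,(m+1)(m+2)\cdots n$ with $\beta\in S_m$ having its left-to-right maxima in the odd positions, and it is enough to lift $\beta\,(m+1)\cdots(n-1)$, because appending a new maximum commutes with $s$, namely $s(\nu\,M)=s(\nu)\,M$ when $M$ is the largest entry. By the inductive hypothesis $\beta\,(m+1)\cdots(n-1)$ is $(t-1)$-sorted; but it carries one more descent than a $t$-sorted permutation of length $n-1$ may have, so no $(t-1)$-st preimage of it is sorted, and to promote it to a $t$-sorted permutation after appending $n$ one must exhibit a $(t-1)$-st preimage $\eta$ that splits as a concatenation $s(A)\,s(B)$ of two sorted words; then $A\,n\,B$ is an $s^t$-preimage of $\pi$. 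Producing such an $\eta$ is where the structural description from the inductive hypothesis, together with a valid hook configuration, does its work. The base case $t=2$ must be treated directly: by Theorem \ref{Thm1} a $1$-sorted $\sigma\in S_{n-1}$ with $\frac{n-2}{2}$ descents is uniquely sorted, and one shows $\sigma\,n$ is $2$-sorted if and only if $\sigma$ has its left-to-right maxima in the odd positions, the ``only if'' coming from the shape of the valid hook configuration of the unique $s$-preimage of $\sigma$.

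For the forward implication I would write $\pi=s(\rho)$ with $\rho$ being $(t-1)$-sorted; by the no-slack principle $\rho$ has $\frac{n-t}{2}$ descents, which is extremal for $(t-1)$-sorted permutations of $S_n$ but sits in the ``off-parity'' range, where one should not expect a clean left-to-right-maxima description of $\rho$ itself. The heart of the matter is then to analyze a valid hook configuration of $\rho$: its extremal shape is essentially forced by Theorem \ref{Thm2} and the descent inequality, it governs how $\rho$ can decompose as $s(\rho_L)\,s(\rho_R)\,n$, and from this one deduces that $\rho$ ends in $n$, hence $\pi_n=n$, and --- iterating, or descending once more to a $(t-2)$-sorted preimage --- that the left-to-right maxima of $\pi$ are exactly $\pi_1,\pi_3,\ldots,\pi_m,\pi_{m+1},\ldots,\pi_n$. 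I expect this last part to be the main obstacle: extracting from the combinatorics of valid hook configurations of extremal sorted permutations the exact statement that their left-to-right maxima alternate with their descent bottoms on an initial segment and then run consecutively to the end, and keeping the off-parity intermediate permutations in the chain under control. Everything else --- the enumeration, the descent inequality, the no-slack principle, and the commuting identity $s(\nu M)=s(\nu)M$ --- is comparatively routine.
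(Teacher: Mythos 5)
Your framework is sound (the structure of ``good'' permutations, the count $(n-t-1)!!$, the inequality $\des(s(\sigma))\leq\des(\sigma)$, and the resulting no-slack principle all check out and match facts the paper uses), but both directions of the equivalence are left as plans rather than proofs, and in each case the missing step is the entire substance of the theorem. For the ``good $\Rightarrow$ $t$-sorted'' direction, your inductive lifting requires exhibiting, at each stage, an $s^{t-1}$-preimage of the length-$(n-1)$ good permutation that happens to decompose as a concatenation $s(A)\,s(B)$ of two sorted words; you assert that ``the structural description from the inductive hypothesis, together with a valid hook configuration, does its work,'' but nothing in your inductive hypothesis (which only says the shorter permutation \emph{is} $(t-1)$-sorted) supplies such a decomposable preimage, and no candidate is constructed. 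The paper sidesteps all of this with an explicit construction: starting from $\pi$, slide the non-left-to-right-maximum entries $\pi_2,\pi_4,\ldots,\pi_{n-t}$ one position to the right, and repeat this $t$ times; each resulting permutation is an $s$-preimage of the previous one, so $\pi=s^t(\pi^{(t)})$. That one observation replaces your entire lifting machinery, and you should look for something of this concrete kind rather than an existence argument.

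For the converse (``$t$-sorted with $\tfrac{n-t}{2}$ descents $\Rightarrow$ good''), you explicitly defer the core analysis --- ``extracting from the combinatorics of valid hook configurations of extremal sorted permutations'' the alternation statement --- and flag it as the main obstacle, so this is an acknowledged gap, not an oversight; still, it is the theorem. Two ideas you are missing would close it. First, the case $t\geq 3$ reduces to $t=2$ not by descending through preimages but by stripping the forced tail: a $t$-sorted $\pi$ ends in $(n-t+3)\cdots n$, and writing $\pi=\pi'(n-t+3)\cdots n$ one checks $\pi'$ is a $2$-sorted permutation in $S_{n-t+2}$ with $\tfrac{n-t}{2}$ descents, so only $t=2$ needs real work. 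Second, for $t=2$ the off-parity problem you worry about (the intermediate $\sigma$ with $s(\mu)=\sigma$, $s(\sigma)=\pi$ is an extremal sorted permutation of \emph{even} length, where Theorem \ref{Thm1} does not directly apply) is resolved by showing $\sigma_1<\sigma_2$ and that the suffix $\sigma_2\cdots\sigma_n$ is itself uniquely sorted of odd length $n-1$; one then applies Corollary \ref{Cor1} to that suffix to force $\sigma_i<\sigma_{i+2}$ at every descent $i$, pins the descents of $\sigma$ to $2,4,\ldots,n-2$, reads off its left-to-right maxima, and transports them to $\pi=s(\sigma)=\sigma_1\sigma_3\sigma_2\sigma_5\sigma_4\cdots\sigma_{n-1}\sigma_{n-2}\sigma_n$. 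Establishing $\sigma_1<\sigma_2$ and the unique sortedness of the suffix requires a further preimage-surgery argument (swapping or cycling initial entries of $\mu$ to produce a second preimage, contradicting unique sortedness); none of this is present in your sketch.
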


The quantity $\max\{\des(s^t(\mu)):\mu\in S_n\}$ drops by roughly a factor of $2$ when $t$ changes from $0$ to $1$. One might expect this quantity to drop by another constant factor when $t$ changes from $1$ to $2$ or from $2$ to $3$. However, Theorem \ref{Thm2} tells us that this is not actually the case; when $t\geq 1$ and we increment $t$ by $1$, this maximum decreases by at most $1$. 

A general rule of thumb for dynamical systems is that things get much more complicated as one considers higher and higher iterates. This is certainly true in the context of $t$-stack-sortable permutations. It follows from Knuth's analysis \cite{Knuth} that a permutation is $1$-stack-sortable if and only if it avoids the pattern $231$, so the number of such permutations in $S_n$ is simply the $n^\text{th}$ Catalan number $C_n=\frac{1}{n+1}{2n\choose n}$. West \cite{West} gave a more complicated characterization of $2$-stack-sortable permutations and conjectured that the number of such permutations in $S_n$ is $\frac{2}{(n+1)(2n+1)}{3n\choose n}$. This was proven by Zeilberger \cite{Zeilberger}, and other proofs emerged later \cite{Cori,DefantCounting,Dulucq,Dulucq2,Goulden}. There is a much more complicated characterization of $3$-stack-sortable permutations due to \'Ulfarsson involving so-called ``decorated patterns," and only very recently has a (very complicated) recurrence for these numbers emerged \cite{DefantCounting}. Morally speaking, the article \cite{DefantCounting} tells us that $3$-stack-sortable permutations fail to conform to some of the nice patterns that $1$-stack-sortable permutations and $2$-stack-sortable permutations obey. We do not even have a characterization of $4$-stack-sortable permutations. 

In light of this rule of thumb for dynamical systems, the utter simplicity of the characterization in Theorem \ref{Thm3} is shocking. This theorem tells us that the set of extremal permutations attaining the maximum number of descents actually becomes much simpler when we consider $t$-sorted permutations for $t\geq 2$ instead of sorted permutations. Indeed, recall from Theorem \ref{Thm1} that the sorted permutations in $S_n$ with exactly $\frac{n-1}{2}$ descents are precisely the uniquely sorted permutations in $S_n$. These permutations are counted by Lassalle's sequence, which is quite complicated (and intriguing!). This sequence did not even appear in the literature until 2012. By contrast, when $t\geq 2$, the extremal permutations are counted by double factorials, which were understood well before 2012. It is also interesting that when $t\geq 2$, the number of such permutations only depends on the difference $n-t$, which is also twice the number of descents in these permutations. 

For emphasis, let us reiterate that the analogue of the characterization in Theorem \ref{Thm3} for $t=1$ is false. One direction is true. If $n$ is odd and $\pi=\pi_1\cdots\pi_n\in S_n$ has $\pi_1,\pi_3,\pi_5,\ldots,\pi_n$ as its left-to-right maxima, then $\pi$ is sorted and has $\frac{n-1}{2}$ descents (equivalently, it is uniquely sorted). However, when $n\geq 5$ is odd, there are uniquely sorted permutations of length $n$ whose left-to-right maxima are not the entries in odd-indexed positions. For example, the uniquely sorted permutations in $S_5$ are $21435, 31425, 32145, 32415, 42135$. 

\section{Valid Hook Configurations}\label{Sec:VHC}
We now define valid hook configurations and state how to use them to determine if a permutation is sorted. We only need valid hook configurations in order to prove Corollary \ref{Cor1} below, so the reader wishing to skip this discussion can simply accept Corollary \ref{Cor1} on the basis of faith and proceed to Section \ref{Sec:Main}. 

The \emph{plot} of a permutation $\pi=\pi_1\cdots\pi_n$ is the graph displaying the points $(i,\pi_i)$ for all $i\in[n]$. The left image in Figure \ref{Fig2} shows the plot of $3142567$. A \emph{hook} of $\pi$ is drawn by starting at a point $(i,\pi_i)$ in the plot of $\pi$, moving vertically upward, and then moving to the right until reaching another point $(j,\pi_j)$. In order for this to make sense, we must have $i<j$ and $\pi_i<\pi_j$. The point $(i,\pi_i)$ is called the \emph{southwest endpoint} of the hook, while $(j,\pi_j)$ is called the \emph{northeast endpoint}. The right image in Figure \ref{Fig2} shows the plot of $3142567$ along with a hook that has southwest endpoint $(3,4)$ and northeast endpoint $(6,6)$.

\begin{figure}[h]
  \centering
  \subfloat[]{\includegraphics[width=0.185\textwidth]{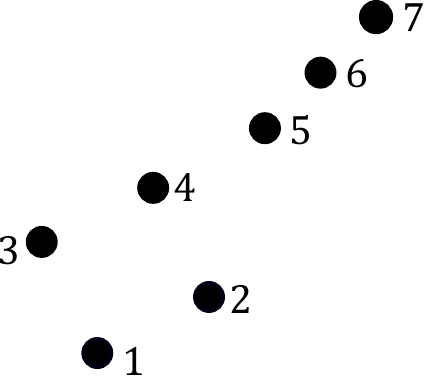}}
  \hspace{1.5cm}
  \subfloat[]{\includegraphics[width=0.185\textwidth]{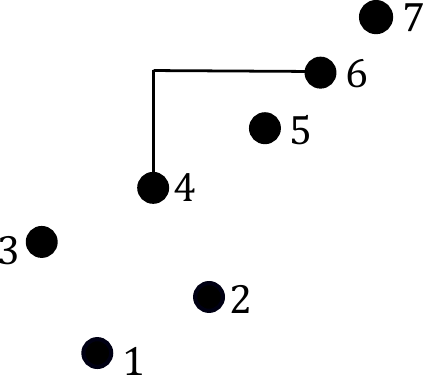}}
  \caption{The left image is the plot of $3142567$. The right image shows this plot along with a single hook.}\label{Fig2}
\end{figure}

\begin{definition}\label{Def2}
Let $\pi=\pi_1\cdots\pi_n$ be a permutation whose descents are $d_1<\cdots<d_k$. Let $\mathcal H=(H_1,\ldots,H_k)$ be a tuple of hooks of $\pi$. Let $(i_u,\pi_{i_u})$ and $(j_u,\pi_{j_u})$ be the southwest endpoint and the northeast endpoint of $H_u$, respectively. We say $\mathcal H$ is a \emph{valid hook configuration} of $\pi$ if the following conditions are satisfied: 

\begin{enumerate}[1.]
\item We have $i_u=d_u$ for every $u\in\{1,\ldots,k\}$.

\item No point in the plot of $\pi$ lies directly above a hook in $\mathcal H$. 

\item The hooks in $\mathcal H$ do not intersect each other except in the case that the northeast endpoint of one hook is the southwest endpoint of another. 
\end{enumerate}  
\end{definition}

\begin{figure}[t]
\begin{center}
\includegraphics[width=.65\linewidth]{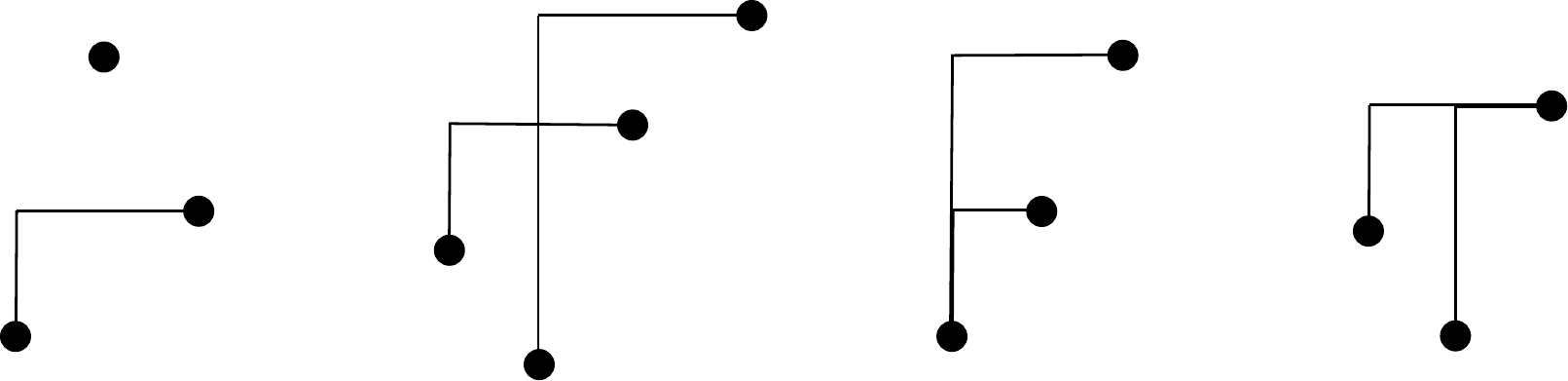}
\caption{Four arrangements of hooks that are forbidden in a valid hook configuration.}
\label{Fig3}
\end{center}  
\end{figure}

\begin{figure}[t]
\begin{center}
\includegraphics[width=.65\linewidth]{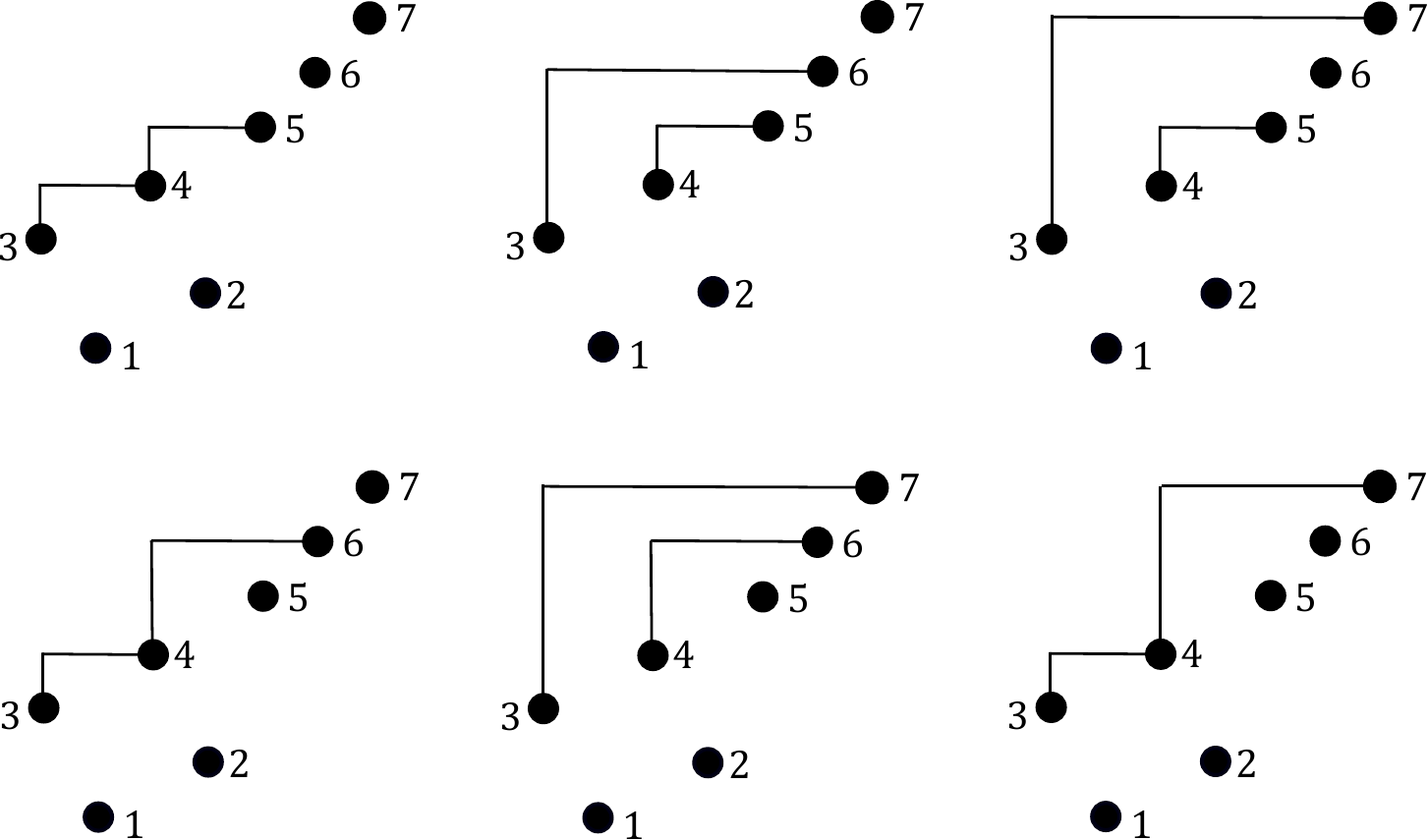}
\caption{The valid hook configurations of $3142567$.}
\label{Fig4}
\end{center}  
\end{figure}

Figure \ref{Fig3} shows arrangements of hooks that are forbidden from appearing in a valid hook configuration by Conditions 2 and 3 in Definition \ref{Def2}.
Figure \ref{Fig4} shows all of the valid hook configurations of $3142567$. 
Observe that the total number of hooks in a valid hook configuration of $\pi$ is exactly $k$, the number of descents of $\pi$. 

The following theorem tells us how to use valid hook configurations to determine whether or not a given permutation is sorted. It is a special consequence of Theorem 5.1 in \cite{DefantPostorder}.\footnote{The article \cite{DefantPostorder} from 2017 is slightly outdated. We refer the reader to \cite{DefantClass} (specifically, Theorem 2.1 in that article) for a more modern treatment of valid hook configurations.} 

\begin{theorem}[\!\!\cite{DefantPostorder}]\label{Thm4}
A permutation is sorted if and only if it has a valid hook configuration. 
\end{theorem}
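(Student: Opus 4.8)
The plan is to prove the two implications separately, in each case exploiting the recursive description of $s$: if $n$ is the largest entry of a permutation $LnR$ (so that $L$ and $R$ are its segments before and after $n$), then $s(LnR)=s(L)\,s(R)\,n$. Equivalently, I would encode a permutation $\mu$ as the decreasing binary plane tree $T(\mu)$ whose in-order traversal spells out $\mu$; then the post-order traversal of $T(\mu)$ spells out $s(\mu)$. Thus proving Theorem \ref{Thm4} amounts to characterizing which permutations $\pi$ occur as the post-order label sequence of some decreasing binary plane tree, and showing that this class coincides with the permutations admitting a valid hook configuration.

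For the direction ``$\pi$ sorted $\Rightarrow$ $\pi$ has a valid hook configuration,'' I would fix a preimage $\mu$ and set $T=T(\mu)$, whose post-order reading is $\pi$. Each position $i$ of $\pi$ corresponds to a node $v_i$ of $T$ (its post-order index), and for each descent $d_u$ of $\pi$ one checks that $v_{d_u}$ must be a left child whose parent $p_u$ has a nonempty right subtree, since otherwise the post-order successor of $v_{d_u}$ would have a larger label, not a smaller one. I would then take the $u$-th hook to run from $(d_u,\pi_{d_u})$ up to $(j_u,\pi_{j_u})$, where $j_u$ is the post-order index of $p_u$ and $\pi_{j_u}$ is its label. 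Condition 1 of Definition \ref{Def2} holds by construction. Conditions 2 and 3 reduce to two facts about post-order: each subtree of $T$ occupies a contiguous block of positions ending at the subtree's root, and an ancestor's label exceeds every label in its subtree. From these, ``a point lies above hook $H_u$'' translates into ``its node lies in the right subtree of $p_u$ yet has label exceeding that of $p_u$,'' which is impossible, and a crossing of two hooks translates into an incomparable pair of subtree relations, which cannot occur in a tree.

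For the converse, ``$\pi$ has a valid hook configuration $\Rightarrow$ $\pi$ sorted,'' I would run this in reverse: given a valid hook configuration $\mathcal H=(H_1,\dots,H_k)$, Conditions 2 and 3 guarantee that the hooks together with the ``ceiling'' above the plot carve the region above the plot of $\pi$ into $k+1$ cells, and that the points of $\pi$ lying along the lower boundary of any single cell increase from left to right. I would rebuild a decreasing binary plane tree by assembling, for each cell, one of the admissible subtrees on its points (a right spine being the simplest choice) and attaching it at the northeast endpoint of the corresponding hook, then verify that the in-order reading of the assembled tree $T$ is a permutation $\mu$ with post-order reading $\pi$, that is, $s(\mu)=\pi$. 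Keeping track instead of all $C_{m_i}$ admissible subtrees on a cell containing $m_i$ points (recall that $|s^{-1}(\mathrm{id}_m)|=C_m=\frac{1}{m+1}\binom{2m}{m}$) would upgrade this argument to the exact fertility formula of \cite{DefantPostorder}, but for Theorem \ref{Thm4} only the existence of one such filling is needed.

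The step I expect to be the main obstacle is precisely the verification in the second paragraph: setting up a clean, checkable dictionary between ``tree language'' (parent, ancestor, descendant, subtree, post-order index) and ``plot language'' (below or above a hook, between two hooks, southwest and northeast endpoints). Once that dictionary is in hand, both directions — and, if one wishes, the Catalan count of the cell fillings — become routine. I would finish by sanity-checking the degenerate cases ($\pi$ increasing, where the configuration is empty and $\mu=\pi$; and $\pi$ with a single descent) and by confirming the construction against the configurations drawn in Figure \ref{Fig4}.
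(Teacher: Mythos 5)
The paper does not actually prove Theorem \ref{Thm4}; it is imported from \cite{DefantPostorder}, where the argument proceeds exactly as you outline --- encoding a preimage $\mu$ as the decreasing binary plane tree whose in-order reading is $\mu$ and whose post-order reading is $s(\mu)$, sending each descent to a hook joining a left child to its parent, and reversing this by filling the regions cut out by the hooks (with $C_{q_i}$ choices per region, which is what yields the full fertility formula). Your forward direction is correct as written, and the converse matches the cited source's strategy, though, as you acknowledge, the tree-versus-plot dictionary needed to complete that direction is only planned rather than carried out here.
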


We can now combine Theorems \ref{Thm1} and \ref{Thm4} to prove the following corollary. This proof is the only place where we explicitly use valid hook configurations. However, we will continue to rely heavily on Theorem \ref{Thm1}, whose proof also uses valid hook configurations. 

\begin{corollary}\label{Cor1}
Let $\pi=\pi_1\cdots\pi_n$ be a permutation. If there is an index $\ell\in[n-2]$ such that $\pi_{\ell+1}<\pi_{\ell+2}<\pi_\ell$, then $\pi$ is not uniquely sorted. 
\end{corollary}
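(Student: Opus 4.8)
The plan is to prove the contrapositive in effect: assuming the hypothesis $\pi_{\ell+1}<\pi_{\ell+2}<\pi_\ell$ for some $\ell\in[n-2]$, I will show that $\pi$ is not uniquely sorted. By Theorem \ref{Thm1}, a sorted permutation of length $n$ is uniquely sorted exactly when it has $\frac{n-1}{2}$ descents, so it suffices to show that if $\pi$ is sorted then it has strictly fewer than $\frac{n-1}{2}$ descents. (If $\pi$ is not sorted there is nothing to prove.) So assume $\pi$ is sorted, and invoke Theorem \ref{Thm4} to fix a valid hook configuration $\mathcal H=(H_1,\ldots,H_k)$ of $\pi$, where $k=\des(\pi)$.

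The key observation is that the index $\ell$ is \emph{not} a descent of $\pi$: since $\pi_\ell>\pi_{\ell+2}>\pi_{\ell+1}$, in particular $\pi_\ell>\pi_{\ell+1}$ — wait, that says $\ell$ \emph{is} a descent. So instead the point of the configuration $\pi_{\ell+1}<\pi_{\ell+2}<\pi_\ell$ is that $\ell$ is a descent but $\ell+1$ is an ascent, and moreover $\pi_{\ell+2}$ lies below $\pi_\ell$. I would argue as follows. Because $\ell$ is a descent, there is a hook $H_u$ with southwest endpoint $(\ell,\pi_\ell)$; let its northeast endpoint be $(j,\pi_j)$ with $j>\ell$ and $\pi_j>\pi_\ell$. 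Condition 2 of Definition \ref{Def2} forbids any point of the plot from lying strictly above the horizontal part of $H_u$, which runs at height $\pi_\ell$ from abscissa $\ell$ to abscissa $j$. Now consider the point $(\ell+2,\pi_{\ell+2})$: since $\pi_{\ell+2}<\pi_\ell$, this point lies strictly below the hook's horizontal segment, which is consistent; but I want to extract a contradiction or a descent-deficiency from the fact that $\ell+1$ is an ascent sitting just to the left of this low point. The cleaner route is a counting argument: the standard bound $\des(\pi)\le\frac{n-1}{2}$ for sorted permutations (cited in the excerpt, and provable from valid hook configurations) comes from the fact that the $k$ hooks partition the plot into regions in a way that forces enough ascents. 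I would recall that proof and show that the local pattern $\pi_{\ell+1}<\pi_{\ell+2}<\pi_\ell$ creates one extra ascent beyond what the hook-counting already accounts for — concretely, that both $\ell+1$ and the index immediately following the northeast endpoint reached from a hook rooted at or below height $\pi_\ell$ are ascents, or that $\ell+2$ cannot be the southwest endpoint of any hook, so it is "wasted" — yielding $\des(\pi)\le\frac{n-1}{2}-1<\frac{n-1}{2}$, hence $\pi$ is not uniquely sorted by Theorem \ref{Thm1}.

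The main obstacle I anticipate is making the last step — "the local ascent at $\ell+1$, combined with $\pi_{\ell+2}<\pi_\ell$, forces a genuine loss of one descent" — fully rigorous against all the ways the hooks of $\mathcal H$ could be arranged near positions $\ell,\ell+1,\ell+2$. In particular I must handle the case where the northeast endpoint of $H_u$ is exactly $(\ell+2,\pi_{\ell+2})$ — impossible here since $\pi_{\ell+2}<\pi_\ell$, which is actually reassuring — and the case where $(\ell+1,\pi_{\ell+1})$ or $(\ell+2,\pi_{\ell+2})$ is itself the southwest endpoint of some hook $H_{u'}$, which requires $\ell+1$ or $\ell+2$ to be a descent; the hypothesis rules out $\ell+1$ being a descent, so only $\ell+2$ is in play, and then $\pi_{\ell+2}<\pi_{\ell+3}$. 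Threading these cases through Conditions 2 and 3 of Definition \ref{Def2} to show that no valid hook configuration of $\pi$ can have as many as $\frac{n-1}{2}$ hooks is the technical heart of the argument; I expect it to follow by carefully bookkeeping which indices can serve as southwest endpoints versus which are forced to be "ascent bottoms" in the region decomposition induced by $\mathcal H$.
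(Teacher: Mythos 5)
Your setup is right (Theorems \ref{Thm1} and \ref{Thm4}, a valid hook configuration $\mathcal H$, the observation that $\ell$ is a descent while $\ell+1$ is not), but the proof never actually happens: you yourself flag the decisive step as ``the technical heart of the argument'' that you ``expect to follow by carefully bookkeeping,'' and that bookkeeping is exactly what is missing. Your plan is to show that the pattern forces $\des(\pi)\le\frac{n-1}{2}-1$ by finding a ``wasted'' index, but you give no mechanism that forces any hook to interact with positions $\ell+1$ and $\ell+2$ at all; a priori the hooks of $\mathcal H$ could simply avoid that region, and nothing you have written rules this out. You also spend your case analysis on which of $\ell+1,\ell+2$ can be a \emph{southwest} endpoint, which is not where the contradiction lives.

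The idea you are missing is a global counting fact about \emph{northeast} endpoints. Assume $\pi$ is uniquely sorted, so $n=2k+1$ where $k=\des(\pi)$, and fix a valid hook configuration. Condition 3 makes the $k$ northeast endpoints distinct, Condition 2 shows no descent bottom can be a northeast endpoint, and there are exactly $k$ descent bottoms; since $2k=n-1$, the descent bottoms and the northeast endpoints together account for \emph{every} point $(i,\pi_i)$ with $2\le i\le n$. Now $(\ell+2,\pi_{\ell+2})$ is not a descent bottom (because $\pi_{\ell+1}<\pi_{\ell+2}$), so it \emph{must} be the northeast endpoint of some hook $H$. Since $\pi_{\ell+2}<\pi_\ell$, the point $(\ell,\pi_\ell)$ would lie above $H$ if the southwest endpoint of $H$ were at position $\le\ell$, so Condition 2 forces that endpoint to be $(\ell+1,\pi_{\ell+1})$ --- contradicting Condition 1, since $\ell+1$ is not a descent. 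This partition argument is the lemma your sketch needs and does not supply; without it (or an equivalent substitute), the proposal is not a proof.
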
 

\begin{proof}
Suppose instead that such an index $\ell$ exists and that $\pi$ is uniquely sorted. Let $k=\des(\pi)$. According to Theorem \ref{Thm1}, $n=2k+1$. Because $\pi$ is sorted, Theorem \ref{Thm4} tells us that it has a valid hook configuration $\mathcal H$. It follows from Condition 3 in Definition \ref{Def2} that the northeast endpoints of the hooks in $\mathcal H$ are distinct. There are $k$ hooks in $\mathcal H$, so there are $k$ northeast endpoints of hooks in $\mathcal H$. Let us say a point $(i,\pi_i)$ in the plot of $\pi$ is a \emph{descent bottom} of the plot of $\pi$ if $i-1$ is a descent of $\pi$. Note that the plot of $\pi$ has exactly $k$ descent bottoms. It follows from Condition 2 in Definition \ref{Def2} that a descent bottom cannot be the northeast endpoint of a hook in $\mathcal H$. Since $n=2k+1$, this implies that the set of descent bottoms of the plot of $\pi$ and the set of northeast endpoints of hooks in $\mathcal H$ form a partition of $\{(i,\pi_i):2\leq i\leq n\}$ into two sets of size $k$. 

Now consider the point $(\ell+2,\pi_{\ell+2})$. This point is not a descent bottom of the plot of $\pi$, so it follows from the previous paragraph that it is the northeast endpoint of a hook $H$ in $\mathcal H$. According to Condition 2 in Definition \ref{Def2}, $H$ cannot pass below the point $(\ell,\pi_\ell)$. This means that the southwest endpoint of $H$ must be $(\ell+1,\pi_{\ell+1})$. However, this contradicts Condition 1 in Definition \ref{Def2} because $\ell+1$ is not a descent of $\pi$.     
\end{proof}

\section{Proofs of Main Results}\label{Sec:Main}
The purpose of this section is to prove Theorems \ref{Thm2} and \ref{Thm3}. Theorem \ref{Thm2} is already known when $t=1$, and Theorem \ref{Thm3} is only stated for $t\geq 2$. Therefore, we may assume $n\geq t\geq 2$. 

\begin{proof}[Proof of Theorem \ref{Thm2}]

Let us begin by proving that every $t$-sorted permutation in $S_n$ has at most $\frac{n-t}{2}$ descents. Observe that a $t$-sorted permutation in $S_n$ must end in the entries $n-t+1,n-t+2,\ldots,n$, in that order. This is a consequence of the definition of the stack-sorting map, and it is the reason why $s^{n-1}(\pi)=123\cdots n$ for every $\pi\in S_n$. Now suppose $\pi\in S_n$ is $t$-sorted. We can write $\pi=\pi'(n-t+2)(n-t+3)\cdots n$ for some permutation $\pi'\in S_{n-t+1}$ that ends in the entry $n-t+1$. There is a $(t-1)$-sorted permutation $\sigma$ such that $s(\sigma)=\pi$. Because $\sigma$ is $(t-1)$-sorted, we have $\sigma=\sigma'(n-t+2)(n-t+3)\cdots n$ for some $\sigma'\in S_{n-t+1}$. By applying the stack-sorting procedure to $\sigma$, we find that $\pi=s(\sigma'(n-t+2)(n-t+3)\cdots n)=s(\sigma')(n-t+2)(n-t+3)\cdots n$. Thus, $\pi'=s(\sigma')$. This means that $\pi'$ is a sorted permutation in $S_{n-t+1}$, so it has at most $\frac{n-t}{2}$ descents. Hence, $\pi$ also has at most $\frac{n-t}{2}$ descents. 

For our next point of business, we assume $n\equiv t\pmod 2$ and consider a permutation $\pi=\pi_1\cdots\pi_n\in S_n$ whose left-to-right maxima are $\pi_1,\pi_3,\pi_5,\ldots,\pi_{n-t+1},\pi_{n-t+2},\pi_{n-t+3},\ldots,\pi_n$. This permutation has exactly $\frac{n-t}{2}$ descents, which are precisely the elements of $\{1,3,5,\ldots,n-t-1\}$. We wish to show that $\pi$ is $t$-sorted. Let $\pi^{(0)}=\pi$. Let $\pi^{(1)}$ be the permutation obtained from $\pi^{(0)}$ by sliding each of the entries $\pi_2,\pi_4,\pi_6,\ldots,\pi_{n-t}$ to the right by $1$ position. For example, if $t=3$, $n=11$, and $\pi=\pi^{(0)}=5\,1\,6\,2\,7\,3\,8\,4\,9\,10\,11$, then $\pi^{(1)}=5\,6\,1\,7\,2\,8\,3\,9\,4\,10\,11$. Now let $\pi^{(2)}$ be the permutation obtained from $\pi^{(1)}$ by sliding each of the entries $\pi_2,\pi_4,\pi_6,\ldots,\pi_{n-t}$ (the same entries as before) to the right by $1$ position. In the above example, $\pi^{(2)}=5\,6\,7\,1\,8\,2\,9\,3\,10\,4\,11$. Continue in this fashion to construct the permutations $\pi^{(1)},\pi^{(2)},\ldots,\pi^{(t)}$. In the above example, $\pi^{(t)}=\pi^{(3)}=5\,6\,7\,8\,1\,9\,2\,10\,3\,11\,4$. It is straightforward to check that $s(\pi^{(i)})=\pi^{(i-1)}$ for every $i\in\{1,\ldots,t\}$. This shows that $\pi=s^t(\pi^{(t)})$, so $\pi$ is $t$-sorted. 

The previous two paragraphs prove Theorem \ref{Thm2} when $n$ and $t$ have the same parity. In order to complete the proof, we need to show that there is a $t$-sorted permutation in $S_n$ with $\frac{n-t-1}{2}$ descents when $n\not\equiv t\pmod 2$. In this case, we know already that there is a $t$-sorted permutation $\lambda\in S_{n-1}$ with $\frac{n-t-1}{2}$ descents. Let $\mu\in S_{n-1}$ be such that $s^t(\mu)=\lambda$. Let $1\oplus\lambda\in S_n$ be the permutation obtained by incrementing each entry in $\lambda$ by $1$ and then prepending a $1$ to the resulting permutation. For example, if $\lambda=324156$, then $1\oplus\lambda=1435267$. Define $1\oplus\mu\in S_n$ similarly. It is straightforward to check that $s^t(1\oplus \mu)=1\oplus\lambda$, so $1\oplus\lambda$ is a $t$-sorted permutation in $S_n$ with $\frac{n-t-1}{2}$ descents. 
\end{proof}

Having completed the proof of Theorem \ref{Thm2}, we proceed to prove Theorem \ref{Thm3}. Let us first prove the characterization stated in this theorem in the case in which $t=2$. We are given that $n\geq 2$ is even. We saw above that every permutation $\pi=\pi_1\cdots\pi_n\in S_n$ whose left-to-right maxima are $\pi_1,\pi_3,\pi_5,\ldots,\pi_{n-1},\pi_n$ is $2$-sorted and has $\frac{n-2}{2}$ descents. We need to prove the converse, which is the statement of the following proposition. 

\begin{proposition}\label{Prop1}
Let $n\geq 2$ be even. Let $\pi=\pi_1\cdots\pi_n\in S_n$ be a $2$-sorted permutation with $\frac{n-2}{2}$ descents. The left-to-right maxima of $\pi$ are $\pi_1,\pi_3,\pi_5,\ldots,\pi_{n-1},\pi_n$. 
\end{proposition}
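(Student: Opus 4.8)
The plan is to reduce the statement to a fact about uniquely sorted permutations and then induct on $n$, the base case $n=2$ (where $\pi=12$) being immediate. Since $\pi$ is $2$-sorted, write $\pi=s(\sigma)$ for some sorted permutation $\sigma$. Every permutation in the image of $s$ ends in its largest entry, so $\sigma=\sigma'n$ for some $\sigma'\in S_{n-1}$, and the recursive description of $s$ gives $\pi=s(\sigma')n$. Set $\pi'=s(\sigma')$, so that $\pi=\pi'n$. As $\pi'$ ends in its largest entry $n-1$, appending $n$ creates no descent, so $\des(\pi')=\des(\pi)=\frac{n-2}{2}=\frac{(n-1)-1}{2}$; since $\pi'$ is sorted, Theorem \ref{Thm1} shows $\pi'$ is uniquely sorted, and hence $\sigma'$ is its unique preimage under $s$. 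Because $\pi_n=n$ is automatically a left-to-right maximum, it now suffices to show the left-to-right maxima of $\pi'$ lie in positions $1,3,5,\ldots,n-1$.

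Being uniquely sorted is not by itself enough here (for instance $32145$ is uniquely sorted but has left-to-right maxima in positions $1,4,5$), so I would extract more from the hypothesis that $\sigma'n$ is sorted. First, $\pi'$ is not $2$-sorted: a $2$-sorted permutation in $S_{n-1}$ has at most $\lfloor\frac{n-3}{2}\rfloor=\frac{n-4}{2}$ descents by Theorem \ref{Thm2}, whereas $\des(\pi')=\frac{n-2}{2}$; hence $\sigma'$ is not sorted. Also $\sigma'$ does not end in $n-1$: otherwise $\pi'$ would end in $n-1$ with $\pi'$ minus its last entry equal to a sorted permutation of length $n-2$ having $\frac{n-2}{2}$ descents, again impossible since sorted permutations of that length have at most $\frac{n-4}{2}$ descents. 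Now, taking a preimage of the sorted permutation $\sigma'n$ and pulling the entry $n$ out of it, one sees $\sigma'$ is a concatenation $\sigma'=AB$ of two nonempty sorted permutations on complementary value sets. Since a sorted permutation ends in its maximum and $\sigma'$ does not end in $n-1$, the entry $n-1$ must lie in $A$, so $A=A'(n-1)$. Applying the recursion for $s$ at the maximum entry of $\sigma'$ gives
\[\pi'=s(\sigma')=s(A')\,s(B)\,(n-1)=\alpha\,\beta\,(n-1),\qquad \alpha:=s(A'),\ \beta:=s(B),\]
with $\alpha$ and $\beta$ sorted.

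I would then pin down $\alpha$ and $\beta$ using the descent count. Since $\des(\pi')=\des(\alpha)+\des(\beta)+[\,\mathrm{last}(\alpha)>\mathrm{first}(\beta)\,]$, since $\alpha$ and $\beta$ have at most $\lfloor\frac{|\alpha|-1}{2}\rfloor$ and $\lfloor\frac{|\beta|-1}{2}\rfloor$ descents respectively (being sorted), and since $|\alpha|+|\beta|=n-2$, the equality $\des(\pi')=\frac{n-2}{2}$ can hold only when $|\alpha|$ and $|\beta|$ are both odd, each of $\alpha,\beta$ attains its maximal descent number (so is uniquely sorted, by Theorem \ref{Thm1}), and $\mathrm{last}(\alpha)>\mathrm{first}(\beta)$. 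But $\beta=s(B)$ with $B$ sorted, so $\beta$ is $2$-sorted; being also uniquely sorted of odd length $|\beta|$, Theorem \ref{Thm2} forces $|\beta|=1$, say $\beta=c$. Consequently $\pi'=\alpha\,c\,(n-1)$ with $|\alpha|=n-3$ and $\alpha$ uniquely sorted having $\frac{(n-3)-1}{2}$ descents.

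Finally I would close the induction. Since $A=A'(n-1)$ is sorted and (after standardizing) $A'$ is the unique preimage of the uniquely sorted permutation $\alpha$, appending the new maximum to $\alpha$ produces a $2$-sorted permutation in $S_{n-2}$ with the maximal $\frac{n-4}{2}$ descents; by the inductive hypothesis its left-to-right maxima lie in positions $1,3,\ldots,n-3,n-2$, so deleting its last entry shows $\alpha$ has left-to-right maxima exactly in positions $1,3,\ldots,n-3$. In $\pi'=\alpha\,c\,(n-1)$ the first $n-3$ entries retain their status, the inequality $\mathrm{last}(\alpha)>c$ (which in particular forces $c\neq n-2$, so $\max(\alpha)=n-2>c$) shows $c$ is not a left-to-right maximum, and $n-1$ is; hence the left-to-right maxima of $\pi'$ are exactly those in positions $1,3,\ldots,n-1$, completing the induction. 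I expect the main obstacle to be the middle step: verifying that $\sigma'$ splits off a sorted prefix ending in $n-1$ and that this prefix corresponds, after standardization, exactly to appending the new maximum to $\alpha$'s unique preimage — this is precisely the bridge that lets the inductive hypothesis apply, and everything afterward is forced by the descent count.
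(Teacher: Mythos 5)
Your argument is correct, and it takes a genuinely different route from the paper. The paper keeps the reduction to the uniquely sorted $\pi'=\pi_1\cdots\pi_{n-1}$ but then performs a direct structural analysis of the intermediate permutation $\sigma$ through six claims (no double descents, $\sigma_1<\sigma_2$, the suffix $\sigma_2\cdots\sigma_n$ is uniquely sorted, $\sigma_i<\sigma_{i+2}$ at each descent, hence the descents of $\sigma$ are $2,4,\ldots,n-2$ and its left-to-right maxima sit at positions $1,2,4,\ldots,n$), and finally reads off $\pi=s(\sigma)=\sigma_1\sigma_3\sigma_2\sigma_5\sigma_4\cdots\sigma_n$ explicitly; the key step ($\sigma_i<\sigma_{i+2}$) invokes Corollary \ref{Cor1}, which is the only place valid hook configurations are used. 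You instead induct on $n$ in steps of $2$: splitting a preimage at the entries $n$ and then $n-1$ gives $\pi'=\alpha\,\beta\,(n-1)$ with $\alpha,\beta$ sorted, the descent count forces both blocks to be uniquely sorted of odd length with a descent at the junction, and the observation that $\beta=s^2(R)$ is simultaneously uniquely sorted and $2$-sorted collapses it to a single entry by Theorems \ref{Thm1} and \ref{Thm2}; since $\alpha(n-1)=s^2(L)$ is again an extremal $2$-sorted permutation of length $n-2$, the inductive hypothesis finishes the job. Your approach buys a proof that bypasses valid hook configurations and Corollary \ref{Cor1} entirely, at the cost of being inductive rather than giving the explicit interleaving description of $\pi$ in terms of $\sigma$ that the paper obtains. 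Two small points worth making explicit in a write-up: the case $|\alpha|=0$ (i.e., $A=(n-1)$) should be ruled out directly, since the Iverson-bracket formula and the bound $\lfloor\frac{|\alpha|-1}{2}\rfloor$ degenerate there (it is excluded because $\des(\pi')$ would then be at most $\frac{n-4}{2}$); and the conclusion $|\beta|=1$ uses that Theorem \ref{Thm2} applies only for length at least $t=2$, which is exactly why the single-entry case survives.
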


\begin{proof}
Because $\pi$ is $2$-sorted, there are permutations $\sigma=\sigma_1\cdots\sigma_n$ and $\mu=\mu_1\cdots\mu_n$ in $S_n$ such that $s(\mu)=\sigma$ and $s(\sigma)=\pi$. Since $\pi$ and $\sigma$ are sorted, we have $\pi_n=\sigma_n=n$. Thus, we can write $\pi=\pi'n$ and $\sigma=\sigma'n$, where $\pi'=\pi_1\cdots\pi_{n-1}$ and $\sigma'=\sigma_1\cdots\sigma_{n-1}$. We have $\pi=s(\sigma)=s(\sigma'n)=s(\sigma')n$, so $s(\sigma')=\pi'$. This shows that $\pi'$ is a sorted permutation in $S_{n-1}$ with $\frac{n-2}{2}$ descents, so it is uniquely sorted by Theorem \ref{Thm1}. We now prove a sequence of claims regarding the permutation $\sigma$. Recall that a point $(u,\lambda_u)$ in the plot of a permutation $\lambda=\lambda_1\cdots\lambda_n$ is called a descent bottom of the plot of $\lambda$ if $u-1$ is a descent of $\lambda$. In this case, we also say that the \emph{entry} $\lambda_u$ is a descent bottom of $\lambda$. For example, the descent bottoms of $5346127$ are $1$ and $3$. We say an index $i\in\{2,\ldots,n-1\}$ is a \emph{double descent} of $\lambda$ if $\lambda_{i-1}>\lambda_i>\lambda_{i+1}$. 

\noindent {\bf Claim 1}: The permutation $\sigma$ has no double descents. 

From the definition of the stack-sorting map, it is straightforward to verify that every descent bottom of the permutation $\pi=s(\sigma)$ is also a descent bottom of $\sigma$. This implies that $\des(\sigma)\geq\des(\pi)=\frac{n-2}{2}$. Now, $\sigma$ is a sorted permutation in $S_n$, so $\des(\sigma)\leq\left\lfloor\frac{n-1}{2}\right\rfloor=\frac{n-2}{2}$. It follows that $\des(\sigma)=\des(\pi)=\frac{n-2}{2}$. Since every descent bottom of $\pi$ is a descent bottom of $\sigma$, we now know that every descent bottom of $\sigma$ is a descent bottom of $\pi$. Suppose $i$ is a double descent of $\sigma$. When we apply the stack-sorting procedure to $\sigma$, there will be a point in time when the entry $\sigma_{i+1}$ sits on top of $\sigma_i$ in the stack. Whichever entry leaves the stack immediately before $\sigma_i$ leaves the stack must be smaller than $\sigma_i$ since it must have sat on top of $\sigma_i$ in the stack. This prohibits $\sigma_i$ from being a descent bottom of $\pi$, which is a contradiction since it is a descent bottom of $\sigma$. This proves Claim 1. 

\noindent {\bf Claim 2}: We have $\sigma_1<\sigma_2$. 

Suppose instead that $\sigma_1>\sigma_2$. Since $\sigma$ has no double descents by Claim 1, we must have $\sigma_2<\sigma_3$. Let $\sigma''=\sigma_2\sigma_1\sigma_3\sigma_4\cdots\sigma_{n-1}\in S_{n-1}$ be the permutation
obtained from $\sigma'$ by switching the positions of its first two entries. Consider sending $\sigma'$ and $\sigma''$ through different stacks simultaneously. When stack-sorting $\sigma'$, the first step is to push $\sigma_1$ into the stack. Next, we push $\sigma_2$ into the stack. The third step is to pop $\sigma_2$ out of the stack (because $\sigma_2<\sigma_3$). When stack-sorting $\sigma''$, the first step is to push $\sigma_2$ into the stack. Next, we pop $\sigma_2$ out of the stack (because $\sigma_2<\sigma_1$). The third step is to push $\sigma_1$ into the stack. Thus, after taking three steps each, the two stack-sorting procedures are in identical configurations. Indeed, in both procedures, $\sigma_2$ is the only entry that has left the stack, $\sigma_1$ is the only entry in the stack, and $\sigma_3\sigma_4\cdots\sigma_{n-1}$ is the remainder of the input permutation consisting of those entries that have not yet entered the stack. From this, it follows that $s(\sigma')=s(\sigma'')$. However, this is a contradiction because $s(\sigma')$ is $\pi'$, which we previously showed is uniquely sorted. This proves Claim 2. 

\noindent {\bf Claim 3}: There is a permutation $\lambda=\lambda_1\cdots\lambda_n\in S_n$ such that $s(\lambda)=\sigma$ and $\lambda_1=\sigma_1$. 

The permutation $\mu\in S_n$ satisfies $s(\mu)=\sigma$, so we are done if $\sigma_1=\mu_1$. Thus, we may assume $\sigma_1=\mu_j$ for some $j\in\{2,\ldots,n\}$. When we send $\mu$ through the stack-sorting procedure, $\mu_j$ is the first entry to leave the stack (because it is $\sigma_1$). This forces $\mu_1>\cdots>\mu_j$ and $\mu_{j+1}>\mu_j$. Let $\lambda=\lambda_1\cdots\lambda_n=\mu_j\mu_1\mu_2\cdots\mu_{j-1}\mu_{j+1}\mu_{j+2}\cdots\mu_n$ be the permutation obtained from $\mu$ by moving the entry $\mu_j$ to the beginning of the permutation and keeping all other entries in the same relative order. Consider sending $\mu$ and $\lambda$ through different stacks simultaneously. When stack-sorting $\mu$, the first step is to push $\mu_1$ into the stack. The second step is to push $\mu_2$ into the stack. We continue until pushing $\mu_j$ into the stack in the $j^\text{th}$ step. The $(j+1)^\text{st}$ step is to pop $\mu_j$ out of the stack (because $\mu_j<\mu_{j+1}$). When stack-sorting $\lambda$, the first step is to push $\mu_j$ into the stack. The second step is to pop $\mu_j$ out of the stack (because $\mu_j<\mu_1$). The third step is to push $\mu_1$ into the stack. We then continue until pushing $\mu_{j-1}$ into the stack in the $(j+1)^\text{st}$ step. Thus, after taking $j+1$ steps each, the two sorting procedures are in identical configurations. Indeed, in both procedures, $\mu_j$ is the only entry that has left the stack, $\mu_1,\mu_2,\ldots,\mu_{j-1}$ are the entries in the stack (listed here from bottom to top), and $\mu_{j+1}\mu_{j+2}\cdots\mu_n$ is the remainder of the input permutation consisting of those entries that have not yet entered the stack. This shows that $s(\lambda)=s(\mu)=\sigma$, so this choice of $\lambda$ has the properties needed to prove Claim 3. 

\noindent {\bf Claim 4}: The permutation $\zeta=\sigma_2\sigma_3\cdots\sigma_n$ is uniquely sorted. 

Let $\lambda$ be the permutation that is guaranteed to exist by Claim 3. When we send $\lambda$ through the stack-sorting procedure, $\lambda_1$ is the first entry to leave the stack (because it is $\sigma_1$). This means that nothing can ever sit on top of $\lambda_1$ in the stack, so $\lambda_1<\lambda_2$. It follows that $\sigma_1\zeta=\sigma=s(\lambda)=\lambda_1s(\lambda_2\lambda_3\cdots\lambda_n)=\sigma_1s(\lambda_2\lambda_3\cdots\lambda_n)$, so $\zeta$ is sorted. We saw in the proof of Claim 1 that $\des(\sigma)=\frac{n-2}{2}$. Claim 2 tells us that $1$ is not a descent of $\sigma$, so $\zeta$ must have $\frac{n-2}{2}$ descents. Since $\zeta$ is a sorted permutation of length $n-1$, it follows from Theorem \ref{Thm1} that $\zeta$ is uniquely sorted. 

\noindent {\bf Claim 5}: For every descent $i$ of $\sigma$, we have $\sigma_i<\sigma_{i+2}$. 

Suppose $i$ is a descent of $\sigma$. Note that $i\neq n-1$ since $\sigma_n=n$ ($\sigma$ is sorted). This means that it makes sense to talk about the entry $\sigma_{i+2}$. Claim 2 tells us that $i\geq 2$, so $i$ is a descent of the permutation $\zeta=\sigma_2\sigma_3\cdots\sigma_n$. Claim 4 tells us that $\zeta$ is uniquely sorted, so it follows from Corollary \ref{Cor1} that we do not have $\sigma_{i+1}<\sigma_{i+2}<\sigma_i$. Claim 1 tells us that $i+1$ is not a double descent of $\sigma$, so we do not have $\sigma_{i+2}<\sigma_{i+1}<\sigma_i$. The only remaining possibility is that $\sigma_{i+1}<\sigma_i<\sigma_{i+2}$. 

\noindent {\bf Claim 6}: The descents of $\sigma$ are $2,4,6,\ldots,n-2$, and the left-to-right maxima of $\sigma$ are $\sigma_1,\sigma_2,\sigma_4,\sigma_6,$ $\ldots,\sigma_n$.   

We saw in the proof of Claim 1 that $\sigma$ has $\frac{n-2}{2}$ descents, and Claim 1 itself guarantees that no two of these descents are consecutive integers. We also know by Claim 2 that $1$ is not a descent of $\sigma$. Furthermore, since $\sigma_n=n$ ($\sigma$ is sorted), the index $n-1$ is not a descent of $\sigma$. Put together, these facts force the descents of $\sigma$ to be $2,4,6,\ldots,n-2$. Now, $\sigma_1$ is obviously a left-to-right maximum of $\sigma$. We also know that $\sigma_2$ is a left-to-right maximum by Claim 2. Since $2$ is a descent of $\sigma$, we know by Claim 5 that $\sigma_3<\sigma_2<\sigma_4$. This shows that $\sigma_3$ is not a left-to-right maximum and that $\sigma_4$ is a left-to-right maximum. Since $4$ is a descent of $\sigma$, we know by Claim 5 that $\sigma_5<\sigma_4<\sigma_6$. This shows that $\sigma_5$ is not a left-to-right maximum and that $\sigma_6$ is. Continuing in this fashion, we find that the left-to-right maxima of $\sigma$ are $\sigma_1,\sigma_2,\sigma_4,\sigma_6,\ldots,\sigma_n$. 

We can now finally determine the left-to-right maxima of $\pi$. It follows from Claim 6 and the definition of $s$ that $\pi=s(\sigma)=\sigma_1\sigma_3\sigma_2\sigma_5\sigma_4
\sigma_7\sigma_6\cdots\sigma_{n-1}\sigma_{n-2}\sigma_n$. Claim 6 also implies that the entries $\sigma_1,\sigma_2,\sigma_4,\sigma_6,\ldots,\sigma_n$, which are the same as the entries $\pi_1,\pi_3,\pi_5,\ldots,\pi_{n-1},\pi_n$, are left-to-right maxima of $\pi$. For example, to see that $\sigma_4$ is a left-to-right maximum of $\pi$, we need to check that $\sigma_4$ is larger than the entries $\sigma_1,\sigma_2,\sigma_3,\sigma_5$. We know that $\sigma_4$ is larger than $\sigma_1,\sigma_2,\sigma_3$ because $\sigma_4$ is a left-to-right maximum of $\sigma$, and it is larger than $\sigma_5$ because $4$ is a descent of $\sigma$. No descent bottom of $\pi$ can be a left-to-right maximum of $\pi$. Since $\des(\pi)=\frac{n-2}{2}$, there are at most $n-\frac{n-2}{2}=\frac{n+2}{2}$ left-to-right maxima of $\pi$. Hence, $\pi_1,\pi_3,\pi_5,\ldots,\pi_{n-1},\pi_n$ are the \emph{only} left-to-right maxima of $\pi$. 
\end{proof}

\begin{proof}[Proof of Theorem \ref{Thm3}]
Proposition \ref{Prop1} completes the proof of the characterization in Theorem \ref{Thm3} when $t=2$. Let us now assume that $n\geq t\geq 3$ and that $n$ and $t$ have the same parity. We already saw at the beginning of this section that every permutation $\pi=\pi_1\cdots\pi_n\in S_n$ whose left-to-right maxima are $\pi_1,\pi_3,\pi_5,\ldots,\pi_{n-t+1},\pi_{n-t+2},\pi_{n-t+3}\ldots,\pi_n$ is $t$-sorted and has $\frac{n-t}{2}$ descents; we need to prove the converse. 

Let $\pi=\pi_1\cdots\pi_n\in S_n$ be an arbitrary $t$-sorted permutation with $\frac{n-t}{2}$ descents. Because $\pi$ is $t$-sorted, it is certainly $(t-2)$-sorted. This means that it ends in the entries $n-t+3,n-t+4,\ldots,n$, so we can write $\pi=\pi'(n-t+3)(n-t+4)\cdots n$ for some $\pi'\in S_{n-t+2}$. There is a $(t-2)$-sorted permutation $\lambda\in S_n$ such that $s^2(\lambda)=\pi$. Since $\lambda$ is $(t-2)$-sorted, we can write $\lambda=\lambda'(n-t+3)(n-t+4)\cdots n$ for some $\lambda'\in S_{n-t+2}$. We now have \[\pi'(n-t+3)(n-t+4)\cdots n=s^2(\lambda)=s^2(\lambda'(n-t+3)(n-t+4)\cdots n)=s^2(\lambda')(n-t+3)(n-t+4)\cdots n,\] so $\pi'=s^2(\lambda')$. This shows that $\pi'$ is a $2$-sorted permutation in $S_{n-t+2}$ with $\frac{n-t}{2}$ descents. According to Proposition \ref{Prop1}, the left-to-right maxima of $\pi'$ are $\pi_1,\pi_3,\pi_5,\ldots,\pi_{n-t+1},\pi_{n-t+2}$. This proves that the left-to-right maxima of $\pi$ are $\pi_1,\pi_3,\pi_5,\ldots,\pi_{n-t+1},\pi_{n-t+2},\pi_{n-t+3},\ldots,\pi_n$, as desired. 

To finally complete the proof of Theorem \ref{Thm3}, we need to show that the number of permutations $\pi=\pi_1\cdots\pi_n\in S_n$ whose left-to-right maxima are $\pi_1,\pi_3,\pi_5,\ldots,\pi_{n-t+1},\pi_{n-t+2},\pi_{n-t+3},\ldots,\pi_n$ is $(n-t-1)!!$ (assuming again that $n\geq t\geq 2$ and that $n\equiv t\pmod 2$). It is convenient to think of constructing the plot of such a permutation, which we can imagine is just a collection of $n$ points in the plane such that no two points lie on a common vertical or horizontal line (without regarding the specific coordinates of these points). We will build the plot by placing points one at a time from left to right. We first place the first point, which will represent the first entry in the permutation. There is $1$ choice for the height of the second point relative to the first point. Namely, the second point must be lower than the first. The third point must be higher than both the first and second points. The fourth point must be lower than the third, but we can freely choose its height relative to the first two points. Thus, there are $3$ choices for the height of the fourth point relative to the first three. The fifth point must be higher than all of the first four points. The sixth point must be lower than the fifth, but we can freely choose its height relative to the first four points. Thus, there are $5$ choices for the height of the sixth point relative to the first five. Continuing in this manner, we find that there are $(n-t-1)!!$ ways to choose the relative heights of the first $n-t$ points. The final $t$ points must be higher than all of the first $n-t$ points, and their heights must be increasing from left to right. Therefore, the total number of ways to construct the plot of $\pi$ is $(n-t-1)!!$. 
\end{proof}

\section{Future Work}
We have given a characterization of the $t$-sorted permutations in $S_n$ that have the maximum possible number of descents when $n\geq t\geq 2$ and $n\equiv t\pmod 2$. A natural next step would involve trying to understand these extremal permutations when $n\not\equiv t\pmod 2$. For example, when $t=2$ and $n$ is odd, we would like to understand (or even just count) the $2$-sorted permutations in $S_n$ with $\frac{n-3}{2}$ descents. This appears to be much more complicated than the case in which $n$ and $t$ have the same parity; any significant progress would be very interesting. 

To elaborate further upon this point, let us note that our proofs often relied upon the characterization given in Theorem \ref{Thm1}. One might hope for a similar characterization that would apply to sorted permutations in $S_n$ with $\frac{n-2}{2}$ descents. One can show that every permutation in $S_n$ with exactly $2$ preimages under $s$ must have exactly $\frac{n-2}{2}$ descents. Unfortunately, the converse is false. The permutation $2134\in S_4$ has $\frac{4-2}{2}=1$ descent, but we can see in Figure \ref{Fig1} that is has $4$ preimages under the stack-sorting map. This lack of an analogue of Theorem \ref{Thm1} is one reason why we might expect studying extremal permutations to be difficult when $n\not\equiv t\pmod 2$. 

\section{Acknowledgments}
The author thanks Mikl\'os B\'ona for suggesting the definition of a $t$-sorted permutation and for suggesting the problem of determining the maximum number of descents that a $t$-sorted permutation of length $n$ can have. He also thanks the anonymous referees for valuable suggestions that improved the presentation of this article. 
The author was supported by a Fannie and John Hertz Foundation Fellowship and an NSF Graduate Research Fellowship.

\end{document}